\title{Formation of singularities for a family of 1D quasilinear wave equations}
 \author{Yuusuke Sugiyama}
 \address{The University of Shiga Prefecture, 2500, Hassaka-cho, Hikone, Shiga 522-8533, Japan}
 \email{sugiyama.y@e.usp.ac.jp}
 \thanks{This work was partially supported by a JSPS Grant-in-Aid for Early-Career Scientists \#19K14573.}
\date{}
\theoremstyle{definition} 
\newtheorem{Def}{Deffinition}[section]
\newtheorem{lemma}[Def]{Lemma}
\newtheorem{theorem}[Def]{Theorem}
\newtheorem{remark}[Def]{Remark}
\newcommand{\N}{\mathbb{N}} 
\newcommand{\R}{\mathbb{R}} 
\begin{document}
\maketitle %
\begin{abstract}
We consider the blow-up of solutions to the following parameterized nonlinear wave equation:
$ u_{tt} =  c(u)^{2} u_{xx} + \lambda c(u)c'(u)( u_x)^2$ with the real parameter $\lambda$. 
In previous works, it was reported that there exist  finite time blow-up solutions with  $\lambda =1$ and $2$. 
However, the construction of a blow-up solution  depends on the  symmetric structure of the equation (e.g., the energy conservation law). 
In the present paper,  we extend the blow-up result  with  $\lambda =1$ to the case with $\lambda \in (0,1]$ by using a new $L^{2/\lambda}$ estimate.
Moreover, some properties for the blow-up solution including the H\"older continuity  are also discussed.
\end{abstract}
%

\section{Introduction}

\subsection{Background and known results}
In the present paper, we consider the Cauchy problem of the following family of 1D quasilinear wave equations parameterized by $\lambda$: 
\begin{eqnarray} 
\left\{  \begin{array}{ll} \label{req}
   u_{tt} =  c(u)^{2}u_{xx} + \lambda c(u)c'(u)( u_x)^2,  \ \ (t,x) \in (0,T] \times \R, \\   
  u(0,x) = u_0 (x),\ \ x \in \R,                \\            
    \partial_t u(0,x) = u_1 (x), \ \ x \in \R,  
\end{array} \right.  
\end{eqnarray} 
where $u(t,x)$ is an unknown real-valued function, and  $c'(\theta)=dc(\theta)/d\theta$. 
Assume that the function $c(\cdot )$  is smooth in a neighborhood of $0$ and satisfies the following:

\begin{eqnarray}
c(0) =c_0 >0  \ \mbox{and} \ c'(0)=c_1  >0. \label{non-deg1}
\end{eqnarray}

The parameterized nonlinear wave equation in \eqref{req} was introduced by Glassey, Hunter, and Zheng \cite{GHZ2} (see also Chen and Shen \cite{CS}).
This equation can be rewritten as
\begin{eqnarray*}
u_{tt} - c^{2-\lambda} [ c^{\lambda}  u_x ]_x =0,
\end{eqnarray*}
which has different mathematical and physical backgrounds depending on $\lambda$.
If $\lambda=2$, then the parameterized nonlinear wave equation in \eqref{req} is formally equivalent to the following conservation system:
\begin{eqnarray*}
\partial_t \left( \begin{array}{cc} U \\ V \\ \end{array} \right)  -\partial_x \left( \begin{array}{cc} V \\ p(U) \\ \end{array} \right) =0,
\end{eqnarray*}
where $U(t,x)=u(t,x)$, $V(t,x)=\int_{-\infty} ^x \partial_t u(t,y) dy$, and $p(\theta)=\int c(\theta)^2 d\theta$. 
This conservation system is referred to as a p-system and describes several phenomena of the wave propagation in nonlinear media, including the electromagnetic wave in a transmission line, shearing-motion in elastic-plastic rods and one-dimensional gas dynamics (see  Ames and Lohner \cite{al}, Cristecu \cite{NC}, Zabusky \cite{z} and Landau and Lifshitz \cite{ll}).
Many authors has investigated the necessary and sufficient conditions for the occurrence of the blow-up solutions and their properties (e.g. Lax \cite{lax},  Klainerman and Majda \cite{km}, Manfrin \cite{mm}, Zabusky \cite{z},  Kong \cite{dxk},  Chen \cite{G} and  Chen, Pan, and Zhu \cite{GPZ}).
In a study of the p-system, the following unknown invariant variables are important:
\begin{eqnarray*}
R= u_t + c(u)u_x \\
S= u_t - c(u) u_x.
\end{eqnarray*}
Note that these variables are $x$-derivatives of the Riemann invariant.
Roughly speaking, it is known that solutions can blow up if $R(0,x)$ or $S(0,x)$ has a positive value. 
Conversely, the negativity of $R(0,\cdot )$ and $S(0, \cdot )$ leads to the absence of singularities.

When $\lambda=1$, the equation in \eqref{req} is called the variational wave equation:
$$
u_{tt} - c(u)[c(u) u_x]_x =0.
$$
As its name suggests, the equation with $\lambda=1$ has a variational structure.
Namely, this equation is derived by the least action principle 
$$
\frac{\delta}{\delta u} \int \{ u_t ^2 -c^2 (u) u_x ^2  \} dxdt =0.
$$
The variational wave equation has physical backgrounds, including nematic liquid crystals and long waves on a dipole chain in the continuum limit (see \cite{GHZ2}).
Zhang and Zheng \cite{zz1} showed the existence of the global classical solution under the assumption that $R(0,x), S(0,x) \leq 0$ for all $x \in \R$.
Glassey, Hunter, and Zheng \cite{GHZ, GHZ2} showed that  solutions can blow up in finite time, if this assumption is not satisfied.
A number of papers have examined the global existence of weak solutions to variational wave equations  without the assumption that  $R(0,x), S(0,x) \leq 0$ (e.g. Bressan and Zheng \cite{bz} and Zhang and Zheng \cite{zz1, zz2, zz3}).
In the results for the construction of the blow-up solution and the existence of the global weak solution, the following energy conservation law is essentially used:
$$
E(t):= \int_{\R} u_t ^2 +  c^2 (u) u_x ^2 dx = \int_{\R} u_1 ^2 + c(u_0)^2 {u_0}_x ^2 dx.
$$

When $\lambda=0$, the equation in \eqref{req} describes the second sound wave of entropy in superfluids (e.g. Landau and Lifshitz \cite{ll}). This equation is the one-dimensional version of
$$
\partial^2 _t u =c(u)^{2} \Delta u, \ \ (t,x) \in (0,T] \times \R^3,
$$
which was studied by Lindbald \cite{lin}. 
In \cite{lin}, Lindblad showed that solutions exist globally in time with small initial data.
If $c(\cdot )$ is uniformly positive, then it seems possible  that \eqref{req} has a global smooth solution for any smooth initial data, although a complete proof or a counterexample for this problem is also open.

When $0 \leq \lambda \leq 2$, the author in \cite{s1} has shown the existence of a global classical solution under the assumptions of the non-positivity of $R(0,x)$, $S(0,x)$ and the uniform positivity of $c(\cdot )$.
It remains open as to whether a global weak solution exists without the non-positivity of $R(0,x)$ and $S(0,x)$, except for the case that $\lambda=1$.
Chen and Shen \cite{CS} presented  numerical results that indicate the existence of a global weak solution.
They also expected that the global weak solution is H\"older continuous with the  H\"older exponent $1-\lambda/2$.
This expectation was solved by Bressan and Zheng \cite{bz}, only in the case with $\lambda=1$.

In the present paper, we treat the blow-up problem in the case of $0< \lambda \leq 1$. 
More precisely, we extend the blow-up result of Glassey, Hunter, and Zheng \cite{GHZ} to the case with $0< \lambda \leq 1$.
Moreover, we show that the blow-up solution is H\"older continuous with the  H\"older exponent $1-\lambda/2$.

\subsection{Assumptions on initial data}
Before stating the main theorem of the present paper, we illustrate the assumption on the initial data and the notation of the lifespan.
In the main theorem, initial data are chosen as follows:  
\begin{align}\label{inias1}
u_1 (x) =-c(u_0 (x)) \partial_x u_0 (x) \ \ \mbox{and} \ \
u_0 (x) =\varepsilon \phi (x/\varepsilon ) 
\end{align}
where $\phi \in C^\infty _0 (\R)$ such that $\partial_x \phi (0)  < 0$ and $\varepsilon >0$.
Note that $R(0,x)=0$ and $S(0,x)=-2c(\varepsilon \phi (x/\varepsilon )) \phi' (x/\varepsilon ) $ for these initial data.
From the standard local-existence theorem for quasilinear wave equations (e.g.  Hughes, Kato, and Marsden \cite{HKM}, Kato \cite{tk} and Majda \cite{m}), 
the lifespan $T^{*}$ of the solution is defined  for suitable initial data as
\begin{equation} \label{life-span}
T^* =\sup \{ T>0 \ | \  \sup_{[0,T]} \| \partial_x u (t) \|_{L^\infty} + \|\partial_t u(t)\|_{L^\infty} < \infty    \  \mbox{and} \  \inf_{[0,T] \times \R} c(u(t,x)) >0  \} .
\end{equation}
This choice of initial data is same as that of Glassey, Hunter, and Zheng \cite{GHZ}. 

\subsection{Main theorems}
We are now in a position to state the first main theorem of the present paper.
\begin{theorem}\label{main}
Let $0<\lambda \leq 1$ and $\varepsilon >0$.
Suppose that initial data $u_0, u_1 \in C^\infty _{0} (\R)$ satisfies the conditions of \eqref{inias1}.
Then, there exists a positive number $\varepsilon_0  $ such that 
if $\varepsilon \leq \varepsilon_0 $, then $T^*$ is bounded independently $\varepsilon $. In particular, the gradient blow-up occurs in finite time.
Namely, it holds that
\begin{eqnarray*}
\limsup_{t\rightarrow T^*} \|u_t (t)\|_{L^\infty} +  \|u_x (t)\|_{L^\infty} = \infty.
\end{eqnarray*}
Furthermore, the solution satisfies the following:
\begin{align}
\sup_{[0,T^*] \times \R} |u_t  + c(u)u_x | \leq C_1 \varepsilon^{\lambda} ,   \label{r-es} \\
\int_{\R}  |u_t|^{2/\lambda} + |u_x|^{2/\lambda} (t,x)dx \leq C_2 \varepsilon  \label{rs-int-es} \\
\frac{{c_0}}{2} \leq c(u(t,x)) \leq 2 {c_0}, \label{c-es}
\end{align}
where the constants $C_1$ and $C_2$  do not depend on $\varepsilon $.
\end{theorem}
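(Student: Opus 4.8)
\emph{Proof sketch.} The plan is to pass to the Riemann variables $R=u_t+c(u)u_x$, $S=u_t-c(u)u_x$, for which \eqref{req} is equivalent to the semilinear hyperbolic system
\begin{align*}
R_t-c(u)R_x&=\frac{c'(u)}{4c(u)}\bigl[\lambda R^2+2(1-\lambda)RS-(2-\lambda)S^2\bigr],\\
S_t+c(u)S_x&=\frac{c'(u)}{4c(u)}\bigl[\lambda S^2+2(1-\lambda)RS-(2-\lambda)R^2\bigr],
\end{align*}
together with $u_t=(R+S)/2$, $u_x=(R-S)/(2c(u))$, $\tfrac{d}{dt}u=R$ along forward characteristics $dx/dt=c(u)$ and $\tfrac{d}{dt}u=S$ along backward characteristics $dx/dt=-c(u)$. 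For the data \eqref{inias1} one has $R(0,\cdot)\equiv 0$ and $S(0,\cdot)=-2c(u_0)\partial_xu_0=-2c(\varepsilon\phi(\cdot/\varepsilon))\phi'(\cdot/\varepsilon)$, whose amplitude is $O(1)$ but whose support has length $O(\varepsilon)$; hence $\int_\R(|R(0)|^{2/\lambda}+|S(0)|^{2/\lambda})\,dx=2^{2/\lambda}\|u_1\|_{L^{2/\lambda}}^{2/\lambda}=O(\varepsilon)$, which is exactly the regime in which an $L^{2/\lambda}$ bound is useful. A direct computation yields the ``$L^{2/\lambda}$ identities''
\begin{align*}
\partial_t\bigl(|S|^{2/\lambda}\bigr)+\partial_x\bigl(c(u)|S|^{2/\lambda}\bigr)&=\frac{(2-\lambda)c'(u)}{2\lambda c(u)}|S|^{2/\lambda-2}RS(S-R),\\
\partial_t\bigl(|R|^{2/\lambda}\bigr)-\partial_x\bigl(c(u)|R|^{2/\lambda}\bigr)&=\frac{(2-\lambda)c'(u)}{2\lambda c(u)}|R|^{2/\lambda-2}RS(R-S),
\end{align*}
which for $\lambda=1$ degenerate to the local conservation laws underlying the classical energy identity; note the right-hand sides make sense precisely because $\lambda\le 1$, i.e. $2/\lambda-2\ge 0$.

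I would then run a continuity (bootstrap) argument on $[0,T^*)$ with the three a priori hypotheses $\|R(t)\|_{L^\infty}\le K_1\varepsilon^\lambda$, $\int_\R(|R|^{2/\lambda}+|S|^{2/\lambda})(t,x)\,dx\le K_2\varepsilon$, and $\tfrac{c_0}{2}\le c(u(t,x))\le 2c_0$, for fixed constants $K_1,K_2$ and small $\varepsilon$. Adding the two $L^{2/\lambda}$ identities and integrating in $x$ gives $\frac{d}{dt}\int_\R(|R|^{2/\lambda}+|S|^{2/\lambda})\,dx=\tfrac{2-\lambda}{2\lambda}\int_\R\tfrac{c'}{c}\bigl(|R|^{2/\lambda-2}-|S|^{2/\lambda-2}\bigr)RS(R-S)\,dx$, whose integrand is $\lesssim\|R\|_{L^\infty}(|R|^{2/\lambda}+|S|^{2/\lambda})$, so Gr\"onwall improves the $L^{2/\lambda}$ bound from its $O(\varepsilon)$ initial value. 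For $c$, I would use the backward characteristic: if $Y$ is the backward characteristic with $Y(t)=x$, then $u(t,x)=u_0(Y(0))+\int_0^tS(s,Y(s))\,ds$, so it suffices to bound $\int_0^t|S(s,Y(s))|\,ds$, and H\"older in $s$ reduces this (as well as the $S^2$-term obtained by integrating the $R$-equation along $Y$, using $R(0,Y(0))=0$) to a single quantity: $\int_0^t|S(s,Y(s))|^{2/\lambda}\,ds$.

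The crux is therefore the bound $\int_0^t|S(s,Y(s))|^{2/\lambda}\,ds\le C\varepsilon$, uniformly over backward characteristics $Y$. I would obtain it by applying the divergence theorem to the first $L^{2/\lambda}$ identity on $\Omega=\{(y,s):0\le s\le t,\ y\le Y(s)\}$: the flux through the right boundary $\{y=Y(s)\}$ equals $\int_0^t2c(s,Y(s))|S(s,Y(s))|^{2/\lambda}\,ds$ (since $dy/ds=-c$ there), the flux through $\{s=t\}$ is nonnegative, the flux through $\{s=0\}$ is $-\int_{-\infty}^{Y(0)}|S(0,y)|^{2/\lambda}\,dy\ge -C\varepsilon$, and there is no contribution at $y\to-\infty$ by finite propagation speed, so $\int_0^t2c|S|^{2/\lambda}\,ds\le C\varepsilon+\int_\Omega|\mathrm{src}|$. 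The source integrand is $\lesssim|R|\,|S|^{2/\lambda}+|R|^2|S|^{2/\lambda-1}$; the first term gives $\int_\Omega|R|\,|S|^{2/\lambda}\le K_1\varepsilon^\lambda\cdot t\cdot K_2\varepsilon=O(\varepsilon^{1+\lambda})$, and the second gives $\int_\Omega|R|^2|S|^{2/\lambda-1}\le K_1^2\varepsilon^{2\lambda}\cdot t\cdot(K_2\varepsilon)^{1-\lambda/2}\sup_s|\operatorname{supp}S(s,\cdot)|^{\lambda/2}=O(\varepsilon^{1+3\lambda/2})$, using that $|\operatorname{supp}S(s,\cdot)|$ is bounded on a fixed time interval by finite propagation speed; both are $o(\varepsilon)$ since $\lambda>0$. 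This closes the three bootstrap inequalities with room to spare, so they persist on all of $[0,T^*)$, giving \eqref{c-es}, \eqref{rs-int-es} (via $u_t=(R+S)/2$, $u_x=(R-S)/(2c(u))$), and \eqref{r-es}. Finally, for the blow-up, choose $x_0$ with $S(0,x_0)=-2c(\varepsilon\phi(x_0/\varepsilon))\phi'(x_0/\varepsilon)\ge\sigma_1>0$ (possible with $\sigma_1$ independent of $\varepsilon$ since $\phi'(0)<0$ and $\varepsilon$ is small) and follow the forward characteristic $X$ from $x_0$: along it $\tfrac{d}{ds}S=\tfrac{c'}{4c}[\lambda S^2+2(1-\lambda)RS-(2-\lambda)R^2]$, and the bounds $\|R\|_{L^\infty}\le K_1\varepsilon^\lambda$ and $c'/c\ge c_1/(4c_0)$ force $\tfrac{d}{ds}S\ge\tfrac{\lambda c_1}{32c_0}S^2$ as long as $S\ge\sigma_1$ and $\varepsilon$ is small; a Riccati comparison then makes $S(s,X(s))\to\infty$ at a time $T_0=O(1/(\lambda c_1\sigma_1))$ independent of $\varepsilon$, and since $|S|=|u_t-c(u)u_x|$ and $c(u)$ stays bounded, $T^*\le T_0$ and the stated blow-up holds, all uniformly in $\varepsilon$.

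The main obstacle is the backward characteristic estimate $\int_0^t|S(s,Y(s))|^{2/\lambda}\,ds\le C\varepsilon$ together with the simultaneous closure of the three bootstrap quantities: one must fix $K_1,K_2$ and then $\varepsilon_0$ in the correct order, and one must absorb the genuinely new source term $|R|^2|S|^{2/\lambda-1}$ (absent when $\lambda=1$) using only the $L^{2/\lambda}$ control of $S$ and the finite-propagation bound on $\operatorname{supp}S$ --- this is the step where the hypothesis $\lambda\le 1$ is essential.
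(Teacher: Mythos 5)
Your proposal is correct and follows essentially the same route as the paper: Riemann invariants, an $L^{2/\lambda}$ balance law closed by a bootstrap on $\|R\|_{L^\infty}$, $\|R\|_{L^{2/\lambda}}^{2/\lambda}+\|S\|_{L^{2/\lambda}}^{2/\lambda}$ and $c(u)$, a divergence-theorem estimate giving $\int_0^t|S|^{2/\lambda}(s,x_-(s))\,ds\le C\varepsilon$ along backward characteristics, H\"older to get $\|R\|_{L^\infty}\lesssim\varepsilon^{\lambda}$, and a Riccati comparison for $S$ along forward characteristics. The only deviations are cosmetic: you integrate the single $|S|^{2/\lambda}$ identity over the half-plane left of the backward characteristic (the paper uses the symmetric sum identity over the characteristic triangle), absorb the $|R|^2|S|^{2/\lambda-1}$ source via H\"older and finite propagation speed rather than a pointwise Young inequality, and run the Riccati step on $S$ itself (absorbing the cross term $2(1-\lambda)RS$) instead of on $c^{(\lambda-1)/2}S$.
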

The second main theorem asserts the more precise nature of the blow-up solution constructed in Theorem \ref{main}.
\begin{theorem}\label{main-pro}
The blow-up solution  constructed in Theorem \ref{main} satisfies the following:
\begin{eqnarray} \label{hol-es}
|u(t,x) - u(s,y)| \leq C (|t-s|^{1-\frac{\lambda}{2}}  + |x-y|^{1-\frac{\lambda}{2}} )
\end{eqnarray}
for all $x,y \in \R$ and $t, s \in [0,T^*)$.
Furthermore, there exists the limit $u(T^*,x)=\lim_{t\rightarrow T^{*}} u(t,x)$ for all $x \in \R$ and 
the solution satisfies  that for all $x,y \in \R$ 
\begin{equation} \label{Tstar-es}
|u(T^*,x) - u(T^*,y)| \leq C |x-y|^{1-\frac{\lambda}{2}}.
\end{equation}
\end{theorem}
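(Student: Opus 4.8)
The plan is to establish the spatial and temporal Hölder continuity of $u$ separately, combine them by the triangle inequality, and then obtain the trace at $t=T^*$ by a Cauchy-sequence argument; throughout, the decisive inputs are the a priori bounds \eqref{r-es}--\eqref{c-es} of Theorem \ref{main} together with the finiteness of $T^*$. For the spatial estimate, fix $t\in[0,T^*)$ and $x<y$, write $u(t,y)-u(t,x)=\int_x^y u_x(t,z)\,dz$, and apply Hölder's inequality with the conjugate exponents $2/\lambda$ and $2/(2-\lambda)$:
\[
|u(t,x)-u(t,y)|\le\Big(\int_{\R}|u_x(t,z)|^{2/\lambda}\,dz\Big)^{\lambda/2}|x-y|^{1-\frac{\lambda}{2}}\le(C_2\varepsilon)^{\lambda/2}|x-y|^{1-\frac{\lambda}{2}},
\]
the last inequality being \eqref{rs-int-es}. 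Thus the spatial half of \eqref{hol-es} is immediate once the $L^{2/\lambda}$ estimate of Theorem \ref{main} is in hand.

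For the temporal estimate, fix $x\in\R$ and $s<t<T^*$, and let $\tau\mapsto X(\tau)$ be the characteristic on $[s,t]$ defined by $X'(\tau)=c(u(\tau,X(\tau)))$ with terminal condition $X(t)=x$; since $u$ is a classical solution on $[0,T^*)\times\R$ this ODE is uniquely solvable and $u$ is $C^1$ along the curve, with $\frac{d}{d\tau}u(\tau,X(\tau))=u_t+c(u)u_x=R$. Integrating gives $u(t,x)-u(s,X(s))=\int_s^t R(\tau,X(\tau))\,d\tau$, so $|u(t,x)-u(s,X(s))|\le C_1\varepsilon^{\lambda}(t-s)$ by \eqref{r-es}, while $|X(s)-x|=|\int_s^t c(u(\tau,X(\tau)))\,d\tau|\le 2c_0(t-s)$ by \eqref{c-es}. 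Applying the spatial estimate to $|u(s,X(s))-u(s,x)|$ and using $t-s\le T^*<\infty$ to absorb the linear factor into $(t-s)^{1-\frac{\lambda}{2}}$, we obtain $|u(t,x)-u(s,x)|\le C|t-s|^{1-\frac{\lambda}{2}}$. The full estimate \eqref{hol-es} then follows from $|u(t,x)-u(s,y)|\le|u(t,x)-u(s,x)|+|u(s,x)-u(s,y)|$.

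Finally, for fixed $x$, the temporal bound shows that $t\mapsto u(t,x)$ is uniformly continuous on $[0,T^*)$, hence satisfies the Cauchy criterion as $t\to T^*$, so $u(T^*,x):=\lim_{t\to T^*}u(t,x)$ exists; since the spatial bound is uniform in $t$, passing to the limit $t\to T^*$ in it yields \eqref{Tstar-es}. I do not anticipate a genuine difficulty here: the one step deserving care is the use of the characteristic curve $X(\tau)$ --- its existence, uniqueness, and the differentiability of $u$ along it --- but this is routine for a classical solution on $[0,T^*)\times\R$, and everything else reduces to Hölder's inequality and the quantitative bounds already furnished by Theorem \ref{main} (so that the resulting constant $C$ may be taken to depend only on $\lambda$, $c_0$, and $\varepsilon_0$).
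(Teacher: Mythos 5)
Your proposal is correct, and the spatial half of \eqref{hol-es} (H\"older's inequality with exponents $2/\lambda$ and $2/(2-\lambda)$ applied to $\int_x^y u_x\,dz$, then \eqref{rs-int-es} and \eqref{c-es}) is exactly the paper's argument. For the temporal half, however, you take a genuinely different route. The paper subtracts the two transport identities for $|R|^p$ and $|S|^p$ to obtain a divergence-form identity \eqref{zz-in}, integrates it over $[t_1,t_2]\times(-\infty,x]$, and deduces the vertical-line trace estimate $\int_{t_1}^{t_2}\bigl(|R|^{p}+|S|^{p}\bigr)(s,x)\,ds\le C$ at each fixed $x$; the temporal H\"older bound then follows by applying H\"older's inequality in time to $u_t=\tfrac12(R+S)$. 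You instead transport the time increment onto a space increment along a plus characteristic: the uniform bound \eqref{r-es} on $R=u_t+c(u)u_x$ controls $u$ along the curve to first order in $t-s$, the finite propagation speed from \eqref{c-es} bounds the horizontal displacement by $2c_0(t-s)$, and the already-proved spatial estimate plus the finiteness of $T^*$ finish the job. Both arguments ultimately rest on the boundedness of $R$ (the paper needs it to control the source term in \eqref{zz-in}; you need it directly), so neither is strictly more general, but yours is more elementary in that it requires no new integral identity, while the paper's buys the additional estimate \eqref{rs-t-es}, which gives $u_t(\cdot,x)\in L^{2/\lambda}(0,T^*)$ at every fixed $x$ and hence the absolute integrability used there to define $u(T^*,x)$; your Cauchy-criterion construction of the trace at $t=T^*$ from uniform temporal continuity is an equally valid substitute. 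The one step you flagged --- solvability of $X'=c(u(\tau,X))$ backward from $(t,x)$ --- is indeed routine, since $u_x$ is bounded on $[0,t]\times\R$ for each $t<T^*$ by the definition \eqref{life-span} of the lifespan.
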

\begin{remark} \label{main-re}
The assertion on the gradient blow-up in Theorem \ref{main} can be replaced by
\begin{eqnarray} \label{lim-noos}
\lim_{t\rightarrow T^*} \|u_t (t)\|_{L^\infty} +  \|u_x (t)\|_{L^\infty} = \infty.
\end{eqnarray}
In other words, $\limsup$ can be improved by $\lim$, which means that $\|u_t (t)\|_{L^\infty} +  \|u_x (t)\|_{L^\infty} $ does not oscillate near $T^*$.
Furthermore, near the blow-up point $(T^*, x^*)$, the solution satisfies the following:
\begin{eqnarray} \label{utux-es}
u_t (t,x) \rightarrow  \infty \ \mbox{and} \ u_x (t,x) \rightarrow  -\infty .
\end{eqnarray}
\eqref{utux-es} is shown by \eqref{r-es} and the fact that $S(t,x)$ grows up near blow-up point $(T^*, x^*)$.
\end{remark}




\subsection{Strategy and ideas of the proof}
The main theorem of the present paper is an analogy of  the result of Glassey, Hunter, and Zheng \cite{GHZ}.
However, the conservation law of the energy plays an important role in their proof.
From the equation in \eqref{req}, the variables $R$ and $S$ satisfy the following:
\begin{eqnarray}\label{fsin}
 \left\{  
\begin{array}{ll}
\partial_t R -c(u)\partial_x R=\dfrac{c'(u)}{4c(u)}( \lambda R^2 + 2(1-\lambda) RS -(2-\lambda )S^2 ), \\
\partial_t S +c(u)\partial_x S =\dfrac{c'(u)}{4c(u)}( \lambda S^2 + 2(1-\lambda) RS -(2-\lambda )R^2 ).
\end{array}
\right.
\end{eqnarray} 
In the equation for $S$,  the term $-(2-\lambda )R^2$ may prevent $S$ from blowing up. 
Hence, we desire the estimate ensuring the smallness of $\|R\|_{L^\infty}$, which enables us to reduce the 
equation for $S$ to a Riccati-type differential equation.
Broadly speaking, the authors in \cite{GHZ} derive the estimate of  $\|R\|_{L^\infty}$ from the energy conservation law.
Specifically, they use another energy estimate on the characteristic curves.
In \eqref{inias1}, the choice of initial data enhances the growth of $S$ and makes $|R|$ smaller as $\varepsilon \rightarrow 0$.
Thus, we can obtain upper and lower estimates of the blow-up time of solutions to the Riccati differential equation, independent of $\varepsilon $.
Since the energy is not conserved in the case that $\lambda  \not=1$, we show a  $L^{2/\lambda}$ inequality as a substitute for the energy via the bootstrap argument.
To obtain the $L^{2/\lambda}$ inequality, we use the following identity
\begin{align} 
\frac{\lambda}{2} (\partial_t (|R|^{2/\lambda} + |S|^{2/\lambda})  - \partial_x (c(u) (|R|^{2/\lambda} -  |S|^{2/\lambda}))) \notag \\
=  \left(\frac{1}{2} -\frac{\lambda }{4}\right) \frac{c'(u)}{c(u)}  R S (R-S)(|R|^{2(1-\lambda)/\lambda} -|S|^{2(1-\lambda)/\lambda} ).
\end{align}
We remark that the right-hand side of this identity vanishes if $\lambda=1$, which implies the energy conservation law.
Under the bootstrap argument ensuring the smallness $\|R\|_{L^\infty}$ and the boundedness of $\|c'/c\|_{L^\infty}$,  we can find that the above identity yields the 
smallness of $\|R \|^{2/\lambda} _{L^{2/\lambda}} + \|S \|^{2/\lambda} _{L^{2/\lambda}} $ from the Gronwall inequality (see Lemma \ref{zz2}), 
since the  right-hand side of the identity is estimated as $C |R| (|R|^{2/\lambda} + |S|^{2/\lambda})$. 

The H\"older regularity with $\lambda=1$ is established for the time-global weak solution proposed by Bressan and Zheng \cite{bz} by the method of 
energy coordinate. Our proof of  Theorem \ref{main-pro} is somewhat simple, as we only use the $L^{2/\lambda}$ inequality for $R$ and $S$, although the theorem treats only our blow-up solutions.
The H\"older estimates in Theorem \ref{main-pro} are expected to hold for global weak solutions (if they exist).
The proof of Theorem \ref{main-pro} is valid with global weak solutions for which unknown variable $R$ or $S$ is bounded.

For the case that $\lambda=2$, the negative quadratic term does not appear in  \eqref{fsin}.
Therefore, the condition on the initial data is much weaker than that for the case in which $\lambda=1$ (e.g., \cite{GPZ}). 
Difficulties in dealing with this case are  focused in the control of $u$ (see Subsection 5.2).

The remainder of the present paper is organized as follows. In Section 2, we recall  some formulas on the characteristic curves and properties of solutions of (\ref{req}). 
The proofs of Theorem \ref{main} in Section 3 and Theorem \ref{main-pro} are presented in Sections 4 and 5, respectively.
Concluding remarks are given in Section 5.

\noindent
{\bf Notation}

We denote the Lebesgue space for $1\leq p\leq \infty$ and the $L^2$ Sobolev space with the order $m \in \N$ on $\R$ by $L^p (\R)$ and $H^m (\R)$, respectively.
For a Banach space $X$, $C^j ([0,T];X)$ denotes the set of functions $f:[0,T] \rightarrow X$ such that $f(t)$ and its $k$ times derivatives for $k=1,2,\ldots , j$ are continuous. 
Various positive constants are simply denoted by $C$.

\section{Preliminary}

\subsection{Basic formulation for unknown variables $R$ and $S$}
We set $R(t,x)$ and $S(t,x)$ as follows:
\begin{eqnarray}\label{ri}\left\{
\begin{array}{ll}
R =\partial_t u +c(u) \partial_x u, \\
S =\partial_t u -c(u) \partial_x u.
\end{array}\right.
\end{eqnarray}
The functions $R$ and $S$ were used in Glassey, Hunter, and Zheng \cite{GHZ, GHZ2} and Zhang and Zheng \cite{zz1}.
We recall some properties of $R$ and $S$ proven in \cite{s1}.  

By (\ref{req}), $R$ and $S$ are  solutions to the system of the following first-order equations:
\begin{eqnarray}\label{fs}
 \left\{  
\begin{array}{ll}
\partial_t R -c(u)\partial_x R=\dfrac{c'(u)}{2c(u)}(R S-S ^2) 
+\lambda  \dfrac{c'(u)}{4c(u)}(R -S )^2 , \\
\partial_x u = \dfrac{1}{2c(u)}(R - S),\\
\partial_t S +c(u)\partial_x S =\dfrac{c'(u)}{2c(u)}(S R-R ^2)
+\lambda  \dfrac{c'(u)}{4c(u)}(S -R )^2 . 
\end{array}
\right.
\end{eqnarray} 
Let $x_{\pm} (t)$ be the characteristic curves of the first and third equations of \eqref{fs}. In other words, $x_{\pm} (t)$ are solutions to the following differential equations:
 \begin{eqnarray}\label{cc}
\dfrac{d}{dt} x_{\pm} (t)=\pm c(u(t,x_{\pm} (t))).
\end{eqnarray}
Whereas the characteristic curve with spatial variable $t_{\pm} (x)$ is defined by solutions to the differential equations, as follows:
 \begin{eqnarray}\label{ccx}
\dfrac{d}{dx} t_{\pm} (x)=\pm \dfrac{1}{c(u(t_{\pm} (x),x))}.
\end{eqnarray}
Based on their definitions, $R$ and $S$ can be expressed on the characteristic curves $x_{\pm} (t)$ by
\begin{align*}
R(t,x_{+} (t)) = \dfrac{d}{dt} u(t,x_{+} (t)), \\
S(t,x_{-} (t)) = \dfrac{d}{dt} u(t,x_{-} (t)). 
\end{align*}
Next, we rewrite \eqref{fs} on the characteristic curves using this equality. 
Multiplying $c^{(\lambda-1)/2}=(c(u))^{(\lambda-1)/2}$ and using the method of characteristics, the first equation of \eqref{fs} is reduced to
\begin{eqnarray} \label{ch-req}
\dfrac{d}{dt} \left(   c^{(\lambda-1)/2} R (t,x_{-} (t)) \right) =  \dfrac{c' c^{(\lambda-3)/2}}{4} \left(  \lambda R^2  -  (2 - \lambda) S^2 \right) .
\end{eqnarray}
Similarly, it holds for $S$ that
\begin{eqnarray} \label{ch-seq}
\dfrac{d}{dt} \left(   c^{(\lambda-1)/2} S (t,x_{+} (t)) \right) =  \dfrac{c' c^{(\lambda-3)/2}}{4} \left(  \lambda S^2  -  (2 - \lambda) R^2 \right) .
\end{eqnarray}
\begin{lemma}\label{zz1} 
Let $0 \leq \lambda \leq 2$, $c(u)>0$, and $c'(u) \geq 0$ for the $C^1$-solution $u$ of \eqref{req} on $[0, T^*)$. Suppose that $R(0,x) \leq 0$ ($S(0,x) \leq 0$ ). Then, it holds that
\begin{eqnarray}\label{lemmazzes1}
R(t,x) \leq 0 \ (S(t,x) \leq 0 \ \mbox{resp.}) \ \mbox{for} \ (t,x) \in [0,T ^* ) \times \R,
\end{eqnarray}
where $R$ and $S$ are the functions in $(\ref{ri})$.
\end{lemma}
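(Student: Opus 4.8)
The plan is to prove the statement for $R$; the argument for $S$ is identical after exchanging the roles of $x_+$ and $x_-$ and of $R$ and $S$. I would work along the backward characteristic $x_-(t)$ passing through a fixed point, using the reduced equation \eqref{ch-req}. The key observation is that the quadratic form on the right-hand side of \eqref{ch-req}, namely $\lambda R^2 - (2-\lambda)S^2$, is \emph{not} sign-definite, so one cannot simply conclude from \eqref{ch-req} alone; the sign information must be fed back in through a Gronwall-type / continuity argument. Set $w(t) := c^{(\lambda-1)/2} R(t,x_-(t))$. Since $c>0$ and $c^{(\lambda-1)/2}>0$, the sign of $w$ agrees with the sign of $R$ along the characteristic, so it suffices to show $w(t)\le 0$ for all $t$ for which the characteristic stays in $[0,T^*)\times\R$, given $w(0)\le 0$.

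The main step is a barrier/continuity argument. Suppose for contradiction that $w$ becomes positive somewhere. Then there is a first time $t_0\ge 0$ with $w(t_0)=0$ and $w'(t_0)\ge 0$ (strictly, one argues at the first time $w$ exits $(-\infty,0]$). But at such a time, from \eqref{ch-req},
\[
w'(t_0) \;=\; \frac{c'c^{(\lambda-3)/2}}{4}\bigl(\lambda R(t_0,\cdot)^2 - (2-\lambda)S(t_0,\cdot)^2\bigr)
\;=\; -\,\frac{c'c^{(\lambda-3)/2}}{4}\,(2-\lambda)\,S(t_0,\cdot)^2 \;\le\; 0,
\]
using $R(t_0,\cdot)=0$ (equivalently $w(t_0)=0$), together with $c'\ge 0$, $c>0$, and $0\le\lambda\le 2$ so that $2-\lambda\ge 0$. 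This forces $w'(t_0)\le 0$; combined with $w'(t_0)\ge 0$ we get $w'(t_0)=0$, so the elementary first-exit argument needs a slightly more careful version — I would instead phrase it via the integral inequality $w(t)\le w(0) + \int_0^t a(\tau)\,w(\tau)\,d\tau$ valid whenever $w\le 0$ on $[0,t]$, where $a$ absorbs the $R^2$ term (writing $\lambda R^2 = \lambda c^{(1-\lambda)/2} R \cdot c^{(\lambda-1)/2}R = (\text{bounded})\cdot w$ on the region where $R$ and $c$ are controlled), and then run Gronwall to conclude $w(t)\le 0$ on any interval where it started nonpositive, extending to all of $[0,T^*)$ by continuity.

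The only genuine obstacle is making the sign-trapping rigorous when $w$ merely touches $0$ tangentially rather than crossing transversally: the naive "first time $w>0$" argument gives $w'(t_0)=0$, not a contradiction. The clean fix is to linearize — observe that along $x_-(t)$ the right-hand side of \eqref{ch-req} can be written as $b(t)\,w(t) + g(t)$ with $g(t) = -\frac{c'c^{(\lambda-3)/2}}{4}(2-\lambda)S^2 \le 0$ and $b(t)$ locally bounded on $[0,T^*)$ (here we use that $u\in C^1$ and $c$ is smooth, so $c$, $c'$, and $R = c^{(1-\lambda)/2}w$ are continuous hence bounded on compact time intervals), whence $\frac{d}{dt}\bigl(e^{-\int_0^t b}\,w\bigr) = e^{-\int_0^t b}\,g \le 0$; integrating gives $e^{-\int_0^t b}w(t) \le w(0)\le 0$, so $w(t)\le 0$. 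Finally I would note that the differential equation \eqref{ch-req} itself is a rephrasing of the first equation of \eqref{fs} via the integrating factor $c^{(\lambda-1)/2}$ and the method of characteristics, which the excerpt has already recorded, so no extra justification is needed there.
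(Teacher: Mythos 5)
Your proposal is correct and follows essentially the same route as the paper: both pass to $\tilde{R}=c^{(\lambda-1)/2}R$ along the minus characteristic, use $c'\ge 0$ and $2-\lambda\ge 0$ to discard (or keep as a nonpositive forcing) the $-(2-\lambda)S^2$ term, and conclude nonpositivity from the resulting scalar ODE. The only difference is cosmetic: the paper invokes the Riccati differential inequality $\tilde{R}'\le \tfrac{c'c^{-(\lambda+1)/2}}{4}\tilde{R}^2$ directly, whereas you rewrite the quadratic term as $b(t)\,w$ with $b$ locally bounded and apply an integrating factor, which is a slightly more explicit way of carrying out the same comparison.
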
 
\begin{proof}
The proof is the same as in the case for which $\lambda =1$ and $\lambda =2$. We give a slightly simple proof here for the self-containedness of the present paper. 
We only show that $R(0,x) \leq 0 $ implies that $R(t,x) \leq 0$. 
We set $\tilde{R}=c^{(\lambda -1)/2} R$. From \eqref{ch-req} and the assumption that $\lambda \in [0,2]$, it follows that
\begin{eqnarray*}
\dfrac{d}{dt} \tilde{R}(t, x_{-} (t)) \leq  \dfrac{c' c^{-(\lambda+1) /2}} {4} \tilde{R}^2 . 
\end{eqnarray*}
Solving this differential inequality, we have $\tilde{R} (t,x) \leq 0$.
\end{proof}

\subsection{Key inequality}
\begin{lemma} \label{zz2} 
Let $0 < \lambda \leq 1$.
Suppose that $ \sup_{[0,T^*)}\|R (t)\|_{L^\infty}  +\sup_{[0,T^*)}\| \frac{c'}{c} (t)\|_{L^\infty}   <\infty$. Then, it holds that for $C^1$-solution of \eqref{req} 
\begin{eqnarray}\label{lemmazzes2} 
\| R(t) \|^\frac{2}{\lambda} _{L^{2/\lambda}}+\| S(t)\|^\frac{2}{\lambda} _{L^{2/\lambda}} \leq   (\|R(0) \|^\frac{2}{\lambda} _{L^{2/\lambda}}+\| S(0) \|^\frac{2}{\lambda} _{L^{2/\lambda}}) e^{C^* t},
\end{eqnarray}
where $ t \in [0, T^*)$ and $C^* = \frac{2}{\lambda} \left(2 -\lambda \right)  \sup_{[0,T^*)}\|R (t)\|_{L^\infty}\sup_{[0,T^*)}\| \frac{c'}{c} (t)\|_{L^\infty}$.
\end{lemma}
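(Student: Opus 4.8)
The plan is to (i) derive the pointwise flux identity announced in Subsection~1.4, (ii) integrate it in $x$, (iii) bound the resulting source term by an elementary algebraic inequality, and (iv) conclude by Gronwall's inequality. Write $E(t):=\|R(t)\|_{L^{2/\lambda}}^{2/\lambda}+\|S(t)\|_{L^{2/\lambda}}^{2/\lambda}$; the goal is $E(t)\le E(0)e^{C^*t}$.

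Since $0<\lambda\le1$ we have $2/\lambda\ge2$, so $r\mapsto|r|^{2/\lambda}$ is $C^1$ on $\R$ with $\tfrac{d}{dr}|r|^{2/\lambda}=\tfrac{2}{\lambda}|r|^{2/\lambda-2}r$, and the chain rule applies to $|R|^{2/\lambda}$ and $|S|^{2/\lambda}$ with no negative powers appearing — this is exactly where $\lambda\le1$ enters. I would compute $\partial_t|R|^{2/\lambda}-c(u)\partial_x|R|^{2/\lambda}$ from the first equation of \eqref{fsin}, $\partial_t|S|^{2/\lambda}+c(u)\partial_x|S|^{2/\lambda}$ from the second, and use $\partial_x u=\tfrac{1}{2c(u)}(R-S)$ (the middle line of \eqref{fs}) to expand $\partial_x\bigl(c(u)(|R|^{2/\lambda}-|S|^{2/\lambda})\bigr)$. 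Upon collecting terms, all the quadratic-in-$(R,S)$ contributions weighted by $|R|^{2/\lambda-2}$ and $|S|^{2/\lambda-2}$ cancel except for one cross term, leaving exactly
\[
\frac{\lambda}{2}\Bigl(\partial_t(|R|^{2/\lambda}+|S|^{2/\lambda})-\partial_x\bigl(c(u)(|R|^{2/\lambda}-|S|^{2/\lambda})\bigr)\Bigr)=\Bigl(\tfrac12-\tfrac{\lambda}{4}\Bigr)\frac{c'(u)}{c(u)}\,RS(R-S)\bigl(|R|^{\frac{2(1-\lambda)}{\lambda}}-|S|^{\frac{2(1-\lambda)}{\lambda}}\bigr).
\]
Integrating over $x\in\R$ annihilates the flux term (for the solutions in question $R(t,\cdot)$ and $S(t,\cdot)$ are compactly supported, by finite propagation speed applied to compactly supported data and the boundedness of $c(u)$ on $[0,T]\times\R$ for $T<T^*$), and differentiation under the integral sign is legitimate since the integrand is continuous with compact support; thus
\[
\frac{\lambda}{2}\frac{d}{dt}E(t)=\Bigl(\tfrac12-\tfrac{\lambda}{4}\Bigr)\int_{\R}\frac{c'(u)}{c(u)}\,RS(R-S)\bigl(|R|^{\frac{2(1-\lambda)}{\lambda}}-|S|^{\frac{2(1-\lambda)}{\lambda}}\bigr)\,dx.
\]

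The heart of the matter is the algebraic bound
\[
\bigl|RS(R-S)\bigl(|R|^{\frac{2(1-\lambda)}{\lambda}}-|S|^{\frac{2(1-\lambda)}{\lambda}}\bigr)\bigr|\le C\,|R|\,\bigl(|R|^{2/\lambda}+|S|^{2/\lambda}\bigr),\qquad R,S\in\R,
\]
with $C$ depending only on $\lambda$. Both sides are positively homogeneous of degree $1+2/\lambda$ in $(R,S)$, so (the case $R=0$ being trivial) it suffices to set $R=1$ and check $|S(1-S)(1-|S|^{p})|\le C(1+|S|^{2/\lambda})$ with $p:=2(1-\lambda)/\lambda\ge0$; this is immediate from $|1-S|\le1+|S|$, $\bigl|1-|S|^{p}\bigr|\le1+|S|^{p}$ and the identity $p+2=2/\lambda$, and one checks $C=2$ is admissible. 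Equivalently, expanding $(R-S)RS$ and using $2+p=2/\lambda$, $1+p=(2-\lambda)/\lambda$, one bounds the four monomials $S|R|^{2/\lambda}$, $R|S|^{2/\lambda}$, $R^2|S|^{(2-\lambda)/\lambda}$, $S^2|R|^{(2-\lambda)/\lambda}$ by Young's inequality with conjugate exponents $(\tfrac{2}{2-\lambda},\tfrac{2}{\lambda})$, always retaining one factor $|R|$ outside. Note that only $|R|$ — not $|S|$ — is extracted, which is forced by the homogeneity and matches the fact that the Lemma assumes only $\sup\|R\|_{L^\infty}<\infty$. Inserting this into the integral identity with $|R(t,x)|\le\sup_{[0,T^*)}\|R\|_{L^\infty}$ and $|c'(u)/c(u)|\le\sup_{[0,T^*)}\|c'/c\|_{L^\infty}$ yields $\tfrac{d}{dt}E(t)\le C^*E(t)$ on $[0,T^*)$ with the constant $C^*$ displayed in the statement (taking $C=4$ in the algebraic bound suffices); since $E\in C^1([0,T^*))$, Gronwall's inequality gives $E(t)\le E(0)e^{C^*t}$, which is \eqref{lemmazzes2}.

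The main obstacle is the algebraic estimate of the third step: one must pull a factor $|R|$ out of an essentially quartic expression in $(R,S)$, and this works only because the two sides share the homogeneity degree $1+2/\lambda$ and because $2/\lambda\ge2$ dominates the growth in the ratio $S/R$ — equivalently, because $0<\lambda\le1$. The remaining ingredients, namely the flux identity and the Gronwall step, are a routine energy-type computation and a textbook ODE inequality, respectively.
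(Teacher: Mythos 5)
Your proposal is correct and follows essentially the same route as the paper: derive the flux identity for $|R|^p+|S|^p$ with $p=2/\lambda$ using $\partial_x u=\frac{1}{2c}(R-S)$, integrate in $x$, bound the right-hand side by pulling out a factor $|R|$ via the elementary inequality $|S(R-S)(|R|^{p-2}-|S|^{p-2})|\le 4(|R|^p+|S|^p)$ (valid because $p\ge2$), and conclude by Gronwall; the constant $C=4$ reproduces exactly the $C^*$ in the statement. Your remark that only $|R|$ can be extracted, and that $p\ge 2$ (i.e.\ $\lambda\le 1$) is the crux, matches the paper's own remark on why the argument fails for $\lambda\in(1,2]$.
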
 
\begin{proof}
We denote $p= 2/\lambda $ and note that $p \geq 2$ for $0< \lambda \leq 2$.
Multiplying both sides of the first equation  in (\ref{fs}) by $|R|^{p-2} R$, we obtain
\begin{align}
\label{zz2es1} 
\frac{1}{p}\{ \partial_t |R|^p  -c \partial_x |R|^p\}= & \dfrac{c' (u)}{2c(u)}(R S- S^2
)|R|^{p-2} R\notag \\
&+ \lambda \dfrac{c'(u)}{4c(u)}(R -S )^2 |R|^{p-2} R .
\end{align}
From the equation $\displaystyle \partial_x u =\frac{1}{2c}(R-S) $, we have
\begin{eqnarray}\label{zz2es2} 
\frac{1}{p} c(u) \partial_x |R|^p=\frac{1}{p} \partial_x (c(u)|R|^p)-\frac{1}{p}\frac{c'}{2c}(R-S)|R|^p,
\end{eqnarray}
from which, $(\ref{zz2es1})$ yields 
\begin{align}\label{zz2es3}
\frac{1}{p}\{ \partial_t |R|^p  - \partial_x (c(u)|R|^p)\}= & -\frac{c'(u)}{2pc(u)} (R-S)|R|^p  \notag \\
& +  \frac{c'(u)}{2c(u)}( R S -S^2)|R|^{p-2} R\notag\\
& +\lambda \dfrac{c'(u)}{4c(u)}(R -S )^2 |R|^{p-2} R.
\end{align}
By a similar computation, we have
\begin{align}
\label{zz2es4} 
\frac{1}{p}\{ \partial_t |S|^p  + \partial_x (c(u)|S|^p)\}= &-\frac{c'(u)}{2pc(u)} (S-R)|S|^{p} \notag \\
& +  \frac{c'(u)}{2c(u)}(S R- R^2 ) |S|^{p-2} S \notag\\
&  + \lambda \dfrac{c'(u)}{4c(u)}(S -R )^2 |S|^{p-2} S .
\end{align}
Summing \eqref{zz2es3} and  \eqref{zz2es4}
\begin{align*}
\frac{1}{p} (\partial_t (|R|^p + |S|^p)  - \partial_x (c(u) (|R|^p -  |S|^p))) = & \frac{c'(u)}{c(u)}\left( -\frac{1}{2p}(R-S)(|R|^p -|S|^p) \right.  \notag \\
& + \frac{1}{2}   R S (R-S)(|R|^{p-2}  -|S|^{p-2} ) \notag \\
 &+ \left. \frac{\lambda}{4} (R-S)^2(|R|^{p-2} R +|S|^{p-2} S) \right)
\end{align*}
and adopting the following fundamental identity under the assumption that $p = \frac{2}{\lambda }$ 
\begin{align*}
\label{key-idt}
(R-S)(|R|^p -|S|^p)-(R-S)^2(|R|^{p-2} R + |S|^{p-2} S)  \notag \\
=R S (R-S)(|R|^{p-2} -|S|^{p-2}), 
\end{align*}
we obtain the identity
\begin{align} 
\frac{1}{p} (\partial_t (|R|^p + |S|^p)  - \partial_x (c(u) (|R|^p -  |S|^p))) \notag \\
=  \left(\frac{1}{2} -\frac{\lambda }{4}\right) \frac{c'(u)}{c(u)}  R S (R-S)(|R|^{p-2} -|S|^{p-2}).
\end{align}
Integrating this identity over $\R$, we have
\begin{align*}
\frac{1}{p} \frac{d}{dt}  \int_{\R}  |R|^p +  |S|^p dx = & \left(\frac{1}{2} -\frac{\lambda }{4}\right)\int_{\R} \frac{c'(u)}{c(u)}  R S (R-S)(|R|^{p-2} -|S|^{p-2}) dx  \\
\leq & \left(2 -\lambda \right)\sup_{[0,T^*)}\|R (t)\|_{L^\infty}  \sup_{[0,T^*)}\| \frac{c'}{c}\|_{L^\infty}   \int_{\R}  |R|^p +  |S|^p dx,
\end{align*}
where we used the fundamental inequality that $|S|(R-S)(|R|^{p-2} -|S|^{p-2}) \leq 4 (|R|^p + |S|^p)$.
Integrating both sides of the above inequality over $[0,t]$ and applying the Gronwall inequality, we complete the proof of Lemma \ref{zz2}.
\end{proof}

\begin{remark}
The proof of Lemma \eqref{zz2} does not work for the case in which $\lambda \in (1,2]$, because $|S|(R-S)(|R|^{p-2} -|S|^{p-2}) \leq 4 (|R|^p + |S|^p)$ does not hold if $p<2$. Even if the boundedness of $\|R (t)\|_{L^{2/\lambda}}$ is obtained, some $L^r$ estimates  with $r \geq 2$ for $R$ are still desired in order to obtain the boundedness of $\|R(t)\|_{L^\infty}$.
\end{remark}

\section{Proof of Theorem \ref{main}}
The proof of Theorem \ref{main} is divided in two steps.
First, we show several a priori bounds of $u$, $R$, and $\|R (t)\|^{2/\lambda} _{L^{2/\lambda}} + \| S (t)\|^{2/\lambda} _{L^{2/\lambda}}$  with small $\varepsilon $ via the bootstrap argument in order to control $R^2$, $c$, and $c'$  in \eqref{ch-seq}.
In particular, we prove  these estimates hold until  a fixed finite time $T_b$, if $\varepsilon $ is sufficiently small.
In the second step, we reduce \eqref{ch-seq} to an ordinary differential equation of  Riccati type. Solving the equation and comparing the blow-up time of the solution to the ODE with $T_b$,
we show that $S(t,x)$ defined in \eqref{ri}  blows up in finite time.

\subsection{Some estimates via the bootstrap argument}
We fix $T_b < \infty$ to be arbitrary, such that $T_b < T^*$, and set $p = 2/\lambda$ with $\lambda \in (0,1].$

Using the bootstrap argument, we show the following assertion: there exists a positive number $\varepsilon_0 >0 $ such that
if $\varepsilon \leq \varepsilon_0 $ and $\varepsilon \leq 1 $, then
\begin{align}
\| u(t) \|_{L^\infty} \leq  ( \| \phi \|_{L^\infty} +  C^* _1  T_b ^{2-\lambda})  \varepsilon^\lambda     \label{boos0} \\
\|R (t)\|_{L^\infty}  \leq C^* _1   T_b ^{1-\lambda} \varepsilon^\lambda  \label{boos2}
\end{align}
for $t \in [0, T_b ]$, where the constants $C^* _1$  are defined as below
\begin{align*}
C^* _1 = \frac{c_1}{c_0}  \left(\dfrac{2}{{c_0}} (2^{4/\lambda}{c_0}^{2/\lambda}   \|  \phi_x\|^{p} _{L^{p}} + 1)\right)^\lambda.
\end{align*}
 Let us give a strategy of the bootstrap argument. Since these three estimates hold  at $t=0$, from the continuity of the solution, the estimates holds on a sufficiently short time interval. We denote the maximal time $T^* _{m} >0$, such that  $T^{*} _{m} \leq T _b $,  and the three estimates hold on $[0,T^{*} _{m}]$. Furthermore, from the continuity of the solution, there also exists time $T^{*} _{r}$, such that the following rough estimates hold on $[0,T^{*} _r]$:
\begin{align}
\| u(t) \|_{L^\infty} \leq 2 ( \| \phi \|_{L^\infty} +  C^* _1  T_b ^{2-\lambda} )\varepsilon^\lambda    \label{boos02} \\
\|R (t)\|_{L^\infty}  \leq 2 C^* _1 T_b ^{1-\lambda} \varepsilon^\lambda . \label{boos22}
\end{align}
We prove that there exists a number $\varepsilon_0 >0 $ independent of $T^{*} _{r}$ and  $T^{*} _{m}$ such that if $\varepsilon \leq \varepsilon_0 $, then $T^* _{m}=T^{*} _{r}$, which implies that $T^* _{m} = T _{b}$.
In fact, if $T^* _{m} < T _{b}$, then the estimates \eqref{boos0}-\eqref{boos2} are extended to the interval $[0, T^* _{r}]$  and $T^* _m < T^* _r \leq T_b$, 
which contradicts the definition of $T^* _m$. Thus, we obtain  $T^* _{m} = T _{b}$.
Hereafter, we show that \eqref{boos02}-\eqref{boos22} implies that \eqref{boos0}-\eqref{boos2} hold on $[0,T^{*} _{r}]$, under the assumption of the smallness of $\varepsilon $.

Here, we prepare auxiliary estimates of $c$, $c'$, and $\|R (t)\|^{p} _{L^{p}} + \| S (t)\|^{p} _{L^{p}}$.
From \eqref{boos02}, $\sup_{[0,T^{*} _{r})}\| c (t)\|_{L^\infty} $ and $\sup_{[0,T^{*} _{r})}\| c' (t)\|_{L^\infty} $ can be estimated as
\begin{eqnarray} 
\frac{1}{2}{c_0} \leq \sup_{[0,T^{*} _{r}]}\| c (t)\|_{L^\infty}  \leq 2 {c_0} \label{escc} \\
\frac{1}{2}{c_1} \leq \sup_{[0,T^{*} _{r}]}\| c' (t)\|_{L^\infty}  \leq 2 {c_1} \label{escc2},
\end{eqnarray}
when $0<\varepsilon \leq \varepsilon_1 $ for some small number $\varepsilon_1 $, which does not depend on  $T^{*} _{m}$ and $T^* _r$.
From  \eqref{escc},  \eqref{escc2}, \eqref{boos22}, and Lemma \ref{zz2}, there exists a number $\varepsilon_2 $, independent of  $T^{*} _{m}$ and $T^* _r$, such that if $\varepsilon \leq \min\{\varepsilon_1 , \varepsilon_2  \}$. Then, it follows that
\begin{align*}
\|R (t)\|^{p} _{L^{p}} + \| S (t)\|^{p} _{L^{p}}  \leq & \exp \left(\frac{16}{\lambda} \frac{{c_1}}{{c_0}} \sup_{[0,T^* _r]} \|R (t)\|_{L^\infty} t\right) \|S(0)\|^p _{L^p} \\
\leq &  \exp \left(\frac{32}{\lambda} \frac{{c_1}}{{c_0}} C^* _1  T^{2-\lambda} _b \varepsilon^\lambda \right) \|S(0)\|^p _{L^p}  \\
\leq & 2 \|S(0)\|^p _{L^p}.
\end{align*}
Hence, we obtain that
\begin{eqnarray}
\|R (t)\|^{p} _{L^{p}} + \| S (t)\|^{p} _{L^{p}} \leq C^* _2 \|  \phi_x\|^{p} _{L^{p}} \varepsilon, \label{boos1}
\end{eqnarray}
where we set $C^* _2 =2^{(4+\lambda)/\lambda} {c_0}^{2/\lambda} $. 

In order to prove \eqref{boos2}, we further prepare $L^p$ estimates of $R$ and $S$  on the characteristic curve.
We arbitrarily  fix $(t,x) \in [0, T^* _r ] \times  \R$ and define   $\triangle_\varepsilon$ as   
\begin{align*}
\triangle_\varepsilon = \{  (s,y) \in [0, T^* _b) \times  \R  \  | \   0 \leq s \leq t_+(y) \ \mbox{for} \ y \in [x_1 , x] \\
     \mbox{and} \   0 \leq s \leq t_-(y) \ \mbox{for} \ y \in [x , x_2]    \},
\end{align*}
where $t_{\pm} (x)$ is the characteristic curve through $(t,x)$, defined in \eqref{ccx}. We set $x_1$ and $x_2$ as the intersection points of $t_{\pm} (x)$ with the $x$-axis. Namely,
$t_+ (x_1) =0$ and $t_- (x_2) =0$.
Specifically, $\triangle_\varepsilon$ is  the domain  surrounded by a curved triangle having $(t,x)$, a support of $u(0,x)$, $[0, t_{\pm }(x)]$ as a vertex, a bottom, and hypotenuses.
Integrating both sides of \eqref{key-idt} over $\triangle_\varepsilon $, from direct computation of the double integral or the divergence theorem of Gauss, we obtain
\begin{align}
\int_{x_1} ^x |R|^{p} (t_+ (y), y) dy +  \int_{x} ^{x_2} |S|^{p} (t_- (y), y) dy  \notag \\
= \frac{1}{2} \int_{x_1} ^{x_2}   |R|^{p} (0, y) +  |S|^{p} (0, y) dy  \notag \\
+ \int_{\triangle_\varepsilon }  \frac{1}{\lambda}\left(\frac{1}{2} -\frac{\lambda }{4}\right) \frac{c'(u)}{c(u)}  R S (R-S)(|R|^{p-2} -|S|^{p-2}) dx  dt .\label{rs-ch}
\end{align}
The first term of the right-hand side is estimated from the fact that $R(0,x)=0$ and $S(0,x)=  -2c(\varepsilon \phi) \phi_x (x/\varepsilon )$ as
\begin{align*}
\frac{1}{2} \int_{x_1} ^{x_2}   |R|^{p} (0, y) +  |S|^{p} (0, y) dy \leq \int_{\R}  |2 c(\phi) \phi_x (x/\varepsilon )|^{p}  dx\\
 \leq 2^{4/\lambda} {c_0}^{\lambda/2}   \| c(\phi) \phi_x\|^{p} _{L^{p}}\varepsilon.
\end{align*}
Then, for the second term, \eqref{escc} and \eqref{escc2} yield the following:
\begin{eqnarray*}
\int_{\triangle_\varepsilon}   \frac{c'(u)}{c(u)}  |R| |S| |R-S|||R|^{p -2} -|S|^{p -2}| dx  dt \\
\leq \dfrac{ 2^5 {c_1}}{\lambda {c_0}} \sup_{[0,T^{*} _{r})}\|R (t)\|_{L^\infty} \int_0 ^ {T^{*} _{r}}  \int_{\R}  (|R|^{p}+|S|^{p}) dx dt ,
\end{eqnarray*}
where we used the inequality $ |R| |S| |R-S|||R|^{p -2} -|S|^{p -2}| \leq 4( |R|^{p}+|S|^{p})$.
Employing  \eqref{boos1} and \eqref{boos22} and noting that $T^{*} _{r} \leq T_b$ and $\lambda \in (0,1]$, we obtain 
\begin{align*}
\left|\int_{\triangle_\varepsilon}  \frac{1}{\lambda}\left(\frac{1}{2} -\frac{\lambda }{4}\right) \frac{c'(u)}{c(u)}  R S (R-S)(|R|^{p-2} -|S|^{p-2}) dx  dt \right| \\
\leq 
\dfrac{ 2^3 {c_1}}{\lambda {c_0}}  C^* _1  C^* _2   T_b ^{2-\lambda}  \|  \phi_x\|^{p} _{L^{p}} \varepsilon^{1+\lambda} \\
\leq   \varepsilon.
\end{align*}
Here, we take $\varepsilon  \leq \varepsilon_3 $ for some small $\varepsilon_3$ depending on  $c$, $\phi$, $\lambda$, and $T_b$.
Applying the above two estimates to \eqref{rs-ch}, we have 
\begin{eqnarray}
\int_{x_1} ^x |R|^{p} (t_+ (y), y) dy +  \int_{x} ^{x_2} |S|^{p} (t_- (y), y) dy  \notag \\
\leq  ( 2^{4/\lambda} {c_0}^{2/\lambda}  \|  \phi_x\|^{p} _{L^{p}} + 1)\varepsilon. \label{rs-ches} 
\end{eqnarray} 
Next, we prove \eqref{boos2}. Integrating \eqref{ch-req} on $[0,t]$, from the H\"older inequality, \eqref{escc}, and \eqref{escc2}, we obtain 
\begin{align}
0 \geq R (t,x) \geq & - c^{-(\lambda-1)/2} \int_0 ^t \dfrac{(2-\lambda)c' c^{(\lambda-3)/2}}{4} S^2 (s,x_- (s)) ds \notag \\
\geq &- \dfrac{2^{2-\lambda} {c_1}}{{c_0}} \int_0 ^{T^{*} _{r}}  \dfrac{S^2 (s,x_- (s))}{4} ds \notag \\
\geq & -\dfrac{{c_1}}{{c_0}} \left( \int_0 ^{T _b} 1 ds \right)^{1-\lambda} \notag  \\
& \times \left( \int_0 ^{T^{*} _{r}}  |S|^{p} (s,x_- (s))  ds \right)^{\lambda} .  \label{0res}
\end{align}
Using a change of variables, such as $t=t_- (x)$, and setting  $x_- (T^{*}_b)=x_0$ (note that $x_2 > x_0$ and  recall that $x_- (0)=x_2$), based on \eqref{rs-ches}, we obtain the following:
\begin{eqnarray*}
\int_0 ^{T^{*} _{r}} |S|^{p} (s,x_- (s))  ds = - \int_{x_2} ^{x_0}  \dfrac{|S|^{p} (t_- (y),y)}{c}  dy \\
\leq  \dfrac{2}{{c_0}}\int_{x_0} ^{x_2}  |S|^{p} (t_- (y),y)  dy  \\
 \leq \dfrac{2}{{c_0}} (2^{4/\lambda} {c_0}^{2/\lambda}  \|  \phi_x\|^{p} _{L^{p}} + 1)\varepsilon. 
\end{eqnarray*}
Thus, \eqref{0res} is estimated  as 
\begin{align*} 
0 \geq R(t,x) \geq &-  \frac{c_1}{c_0}  \left(\dfrac{2}{{c_0}} (2^{4/\lambda}{c_0}^{2/\lambda}   \|  \phi_x\|^{p} _{L^{p}} + 1)\right)^\lambda T_b ^{1-\lambda} \varepsilon^\lambda \\
= &  -C^* _1  T_b ^{1-\lambda} \varepsilon^\lambda.
\end{align*}
From the equality 
$$
 u(t,x_{-} (t)) = u(0,x_{-} (0)) + \int_0 ^t R(s,x_{-}(s)) ds
$$
and \eqref{boos2} and the assumption $\lambda \in (0,1]$, it holds that
\begin{align*}
|u(t,x)| \leq & \varepsilon \| \phi \|_{L^\infty} + \int_0 ^{T^{*} _{r}} |R(s,x_{-} (s))| ds \\
\leq & \varepsilon \| \phi \|_{L^\infty} +  C^* _1  T_b ^{2-\lambda} \varepsilon^\lambda \\
\leq & (\| \phi \|_{L^\infty} +  C^* _1  T_b ^{2-\lambda}) \varepsilon^\lambda .
\end{align*}
Hence we have that  \eqref{boos0} holds on $[0,T^{*} _{r}]$.

\subsection{Formation of the singularity via the ODE argument}
This part is similar to the argument in Glassey, Hunter, and Zheng \cite{GHZ}. 
We present in detail for the self-containedness of the present paper. 
We set
\begin{eqnarray}
T_b =  \dfrac{-2^{(3-\lambda )/2}}{C^* _3  {c_0}^{(\lambda +1)/2}\varphi_x (0)} \label{def-tb}.
\end{eqnarray}
We  suppose that $T^* =\infty$. 
The estimates \eqref{boos0}-\eqref{boos2} hold on $[0,T_b]$ under our contradiction argument.
Setting $s(t) =c^{(\lambda-1)/2} S (t,x_{+} (t))$, we  rewrite \eqref{ch-seq} as
\begin{align*}
\dfrac{d}{dt} s(t) = & c' \left(\lambda \dfrac{c^{-(\lambda +1)/2}}{4} s^2  -  (2-\lambda) \dfrac{c^{(\lambda-3)/2}}{4} R^2 (t,x_+ (t)) \right) .
\end{align*}
We take the characteristic curve as $x_+ (0) =0$.
Applying  \eqref{boos2}, \eqref{escc},  and  \eqref{escc2}, we have the differential inequality on $[0,T_b)$
\begin{eqnarray*}
\dfrac{d}{dt} s(t) \geq & C^* _3 s^2 - C^* _4  T_b ^{2(1-\lambda)}   \varepsilon^{2\lambda} ,
\end{eqnarray*}
where the constants $C^* _3$ and $C^* _4$ are defined as
\begin{eqnarray}
C^* _3  = \lambda {c_1} {c_0}^{-(\lambda +1)/2}   \label{c3}
\end{eqnarray}
and
\begin{align}
C^* _4 = 2(2-\lambda)    {c_1}{c_0}^{(\lambda-3)/2}  (C^* _1) ^2. \label{c4}
\end{align}
We will show that $s(t)$ blows up before the time $T_b$.
Setting  $a^2 = C^* _3$ and $b^2 =C^* _4$ with $a, b>0$ and considering the following initial value problem of an ordinary differential equation of the Riccati type:
\begin{eqnarray}\label{ode}
 \left\{  
\begin{array}{ll}
\dfrac{d}{dt} \bar{S}(t) = a^2 \bar{S}^2 - b^2   T_b ^{2(1-\lambda)} \varepsilon^{2\lambda}  ,\\
\bar{S}(0)= \sigma^2 ,
\end{array}
\right.
\end{eqnarray}
where $\sigma^2 = -  2^{(\lambda -1)/2} {c_0}^{(\lambda +1)/2}\varphi_x (0)>0$. Note that $s(0) \geq \sigma^2$ and $T_b$ satisfy
$T_b = 2/a^2 \sigma^2$.
By the standard comparison argument, $s(t) \geq S(t)$ holds on $[0, T_b).$
Solving this equation, we obtain 
\begin{eqnarray*}
S(t)=\dfrac{(1+k(t))b \varepsilon^\lambda }{(1-k(t))a},
\end{eqnarray*} 
where $k(t)$ is defined by
$$
k(t) =e^{2ab\varepsilon t} \dfrac{a \sigma^2 - b T_b ^{1-\lambda}  \varepsilon^\lambda }{a \sigma^2 - b T_b ^{1-\lambda}  \varepsilon^\lambda  }.
$$
Thus, $S(t)$ blows up at 
$$
T^* _{b, \varepsilon}  =\dfrac{1}{2ab  T_b ^{1-\lambda} \sigma \varepsilon^\lambda } \log\dfrac{a\sigma^2  +b T_b ^{1-\lambda} \varepsilon^\lambda }{ a \sigma^2 - b T_b ^{1-\lambda}\varepsilon^\lambda }.
$$
Note  that  $T^* _{b, \varepsilon} \rightarrow 1/a^2  \sigma^2$ as $\varepsilon \rightarrow 0$.  In particular,  we  find $T^* _{b, \varepsilon}  < T_b$ if $\varepsilon $ is sufficiently small.
Therefore, $s(t)$ blows up in finite time.

\section{Properties of the blow-up solution}
\subsection{Proof of Theorem \ref{main-pro}}
In this section, we first show  H\"older estimates  \eqref{hol-es} and \eqref{Tstar-es} in Theorem \ref{main-pro}.
Setting $p=2/\lambda$ and subtracting \eqref{zz2es4} from \eqref{zz2es3}, we obtain
\begin{align*}
\frac{1}{p} (\partial_t ( |S|^p - |R|^p )  + \partial_x (c(u) (  |S|^p + |R|^p ))) = & \frac{c'(u)}{c(u)}\left( -\frac{1}{2p}(S-R)(|S|^p+ |R|^p) \right.  \notag \\
& + \frac{1}{2}   R S (S-R)(|R|^{p-2}   +|S|^{p-2} ) \notag \\
 &+ \left. \frac{\lambda}{4} (R-S)^2( |S|^{p-2} S - |R|^{p-2} R) \right).
\end{align*}
Using the identity that
\begin{align*}
(S-R)(|S|^p + |R|^p)-(R-S)^2(|S|^{p-2} S - |R|^{p-2} R )  \notag \\
=R S (S-R)(|S|^{p-2} + |R|^{p-2}), 
\end{align*}
we have 
\begin{align} 
\frac{1}{p} (\partial_t ( |S|^p - |R|^p )  + \partial_x (c(u) (|R|^p +  |S|^p))) \notag \\
=  \left(\frac{1}{2} -\frac{\lambda }{4}\right)\int_{\R} \frac{c'(u)}{c(u)}  R S (S-R)(|R|^{p-2} +|S|^{p-2}). \label{zz-in}
\end{align}
Integrating both sides of \eqref{zz-in} in  $[t_1 , t_2] \times (-\infty,x]$, we obtain 
\begin{eqnarray*}
\int_{t_1} ^{t_2} c(u) \left( |R|^{p} (s,x) +  |S|^{p} (s,x)   \right) ds  =  F(t_2, x) - F(t_1 , x) \\
 +\left(\frac{1}{2} -\frac{\lambda }{4}\right) \int_{t_1} ^{t_2} \int_{-\infty} ^x  \frac{c'(u)}{c(u)}  R S (S-R)(|R|^{p-2} +|S|^{p-2})  ds dx,
\end{eqnarray*}
where $F(t,x) = \int_{-\infty} ^x  (  |S|^p   - |R|^p ) (t,y)   dy$.  
Then, we compute  
\begin{align*}
|F(t_1 ,x) - F(t_2 ,x)|  \leq & |F(t_1, x)| + |F(t_2 , x)| \\
\leq & 4 \sup_{[0,T^*)} \left(\|R (t) \|^{p }_{L^{p}} + \|S (t) \|^{p}_{L^{p}} \right).
\end{align*} 
Thus, from \eqref{rs-int-es} and \eqref{c-es}, we obtain 
\begin{align} \label{rs-t-es}
\int_{t_1} ^{t_2}   |R|^{p} (s,x) +  |S|^{p} (s,x)    ds   \leq C.
\end{align}
From the fundamental theorem of calculus and the H\"older inequality, it follows that
\begin{align*}
|u(t_1 ,x) - u (t_2 ,x)| \leq & \int_{t_1} ^{t_2} |u_t (s,x)| ds \\
\leq & |t_1 - t_2|^{1-\frac{1}{p}} \int_{t_1} ^{t_2} |u_t (s,x)|^{p} ds \\
\leq & C |t_1 - t_2|^{1-\frac{1}{p}} \int_{t_1} ^{t_2} |R (s,x)|^{p}  + |S (s,x)|^{p} ds \\ 
\leq & C  |t_1 - t_2|^{1-\frac{1}{p}}.
\end{align*}
Relying on  \eqref{c-es}  and\eqref{rs-int-es},  we obtain 
\begin{align*}
|u(t,x) - u(s,y)| \leq & \int_y ^x |u_x (t,y)| dy  \\
\leq & |x-y|^{1-\frac{1}{p}} \| u_x (t) \|_{L^{p}} \\
\leq  &|x-y|^{1-\frac{1}{p}} \| u_x (t) \|_{L^{p}}  \\
\leq &C |x-y|^{1-\frac{1}{p}} (\| R (t) \|_{L^{p}}  + \| S (t) \|_{L^{p}}),  
\end{align*}
which completes the proof of \eqref{hol-es}.


 Next we show \eqref{Tstar-es}. From the fundamental theorem of calculus, the H\"older inequality, \eqref{rs-int-es}, and Lebesgue's convergence theorem, it is possible to define $u(T^* , x)$ for all $x \in \R$ as 
\begin{eqnarray*}
u(T^* ,x) = u(0,x) + \int_0 ^{T^*} u_t (s,x) ds . 
\end{eqnarray*}
Therefore, letting $t_1 , t_2  \rightarrow T^*$ in \eqref{hol-es}, we obtain \eqref{Tstar-es}. 

\subsection{Proof of the assertion in Remark \ref{main-re}}
Next, we show \eqref{lim-noos}.
From the boundedness of $\|R (t) \|_{L^\infty}$ and \eqref{c-es}, it is sufficient to show that
$$
\lim_{t\rightarrow T^*} \|S (t) \|_{L^\infty} = \infty.
$$
We set $s=c^{(\lambda-1)/2}(u) S$ and recall that $s$ satisfies 
\begin{eqnarray*}
\dfrac{d}{dt}s(t, x_+ (t)) \geq  C^* _3 s^2 - C^* _4  T_b ^{2(1-\lambda)}   \varepsilon^{2\lambda} ,
\end{eqnarray*} 
where $C^* _3$, $C^* _4$, and $T_b$ are positive constants defined in \eqref{c3}, \eqref{c4}, and \eqref{def-tb}, respectively.
We suppose that there exists a sequence $\{  t_j \}_{j \in \N} \in  (0,T^*) $  such that $t_j < t_{j+t}$, $\lim_{j\rightarrow \infty} t_j = T^* $ and  that $\| S (t_j)\|_{L^\infty} $ is uniformly bounded for all  $j \in \N$.
We set $M= \sup_{j \in \N} \| S (t_j)\|_{L^\infty} $.
For an arbitrarily  large number $L >0$,  there is a number  $j_0  \in \N$  such that   $  S (s_0, x_0) \geq L $ for some $s_0  \in (t_{j_0}, t_{j_0 +1})$ and $x_0 \in \R$,
because it holds that $\limsup_{t\rightarrow \infty}  \| S (t)\|_{L^\infty} =\infty$.
Considering  \eqref{ch-seq} on the characteristic curve of the positive direction through $(s_0 , x_0)$, we find that from a standard comparison argument 
\begin{eqnarray*}
\dfrac{d}{dt} s(t,x_+ (t)) \geq 0,
\end{eqnarray*}
if $L$ is sufficiently large, such that $s^2(s_0, x_0)  \geq 2^{(\lambda-1)} {c_0}^{(\lambda-1)} L^2 > C^* _4  T_b ^{2(1-\lambda)} \varepsilon^{2\lambda} $. Hence, we have 
\begin{align*}
\| S(t_j) \|_{L^\infty}  \geq  &\left(\frac{{c_0}}{2}\right)^{(1-\lambda)/2}  \| c^{(\lambda-1)/2}S(t_j) \|_{L^\infty} \\
 \geq  & \left(\frac{{c_0}}{2}\right)^{(1-\lambda)/2}s(s_0, x_0) \\
\geq &  2^{-(1-\lambda)}L,
\end{align*}
which is a contradiction, if $2^{-(1-\lambda)}L >M$.  The proof of  \eqref{lim-noos}  is completed.

\section{Concluding remarks}

\subsection{Generalized Carlemann model}
Here, we present remarks on the following generalized Carlemann model:
\begin{eqnarray}\label{carle}
 \left\{  
\begin{array}{ll}
\partial_t R -\partial_x R= a_1 R^2 + b_1 RS + c_1  S^2 , \\
\partial_t S +\partial_x S = a_2 S^2 + b_2 RS + c_2 R^2 .
\end{array}
\right.
\end{eqnarray} 
The original Carlemann model, appearing in the discrete kinetic theory, is the case that $a_1 =a_2 =1$, $b_1 = b_2 $, and $c_1 = c_2 = -1$.
The blow-up existence and the blow-up problems of this system have been studied by many authors (e.g. Tartar \cite{tartar}, Balabane \cite{bal}, Aregba-Driollet and Hanouzet \cite{Ardr1}, Rauch \cite{JR} and Bianchini and Staffilani \cite{rbgs}).
Blow-up solutions can be constructed  if $a_1 ^2 + a_2 ^2 \not=0$ (e.g. Aregba-Driollet and Hanouzet \cite{Ardr1} and Rauch \cite{JR}).
However, in the blow-up results, their method relies heavily on the discontinuity of solutions and initial data. (Note that this model is locally well posed for $L^\infty$ initial data). To the best of our knowledge, it would be still unknown as to whether there exists a blow-up solution with smooth initial data, even for the original Carlemann model.

\subsection{Blow-up results for the case in which $\lambda=2$}
As reviewed in the introduction, when $\lambda=2$, necessary and sufficient conditions  have been established by several authors.
For simplicity, we only consider the case with $c(u)=(1+u)^{a}$, which appears in the modeling of the isentropic fluid if $a \leq -1$ and in the modeling of the elastic or plastic materials if $a>0$ (see \cite{NC}).
In this case, the following variables (called Riemann invariants) are very useful to obtain point-wise estimates of $u$
\begin{eqnarray*}
w_+ (t,x)=\frac{(1+u)^{a+1}}{a+1} + \int_{-\infty} ^x u_t dx \\
w_- (t,x)=\frac{(1+u)^{a+1}}{a+1} - \int_{-\infty} ^x u_t dx,
\end{eqnarray*}
since $w_\pm$  are invariant on the minus and plus characteristic curves ($w_{\pm} (t,x_{\mp} (t)) = w_{\pm} (0,x_{\mp} (0))$), respectively.
From this invariance property, one can easily obtain 
\begin{align}
\frac{2(1+u(t,x))^{a+1}}{a+1} = & w_+ (t,x) - w_- (t,x)  = w_+ (0,x_- (0)) - w_- (0,x_0 (0))\notag \\
 =& \frac{(1+u_0 (x_- (0)))^{a+1}}{a+1} - \frac{(1+u_0(x_+ (0)))^{a+1}}{a+1} \notag  \\
  &+\int_{x+(0)} ^{x_- (0)} u_1 (x) dx, \label{df}
\end{align}
which implies a uniform lower estimate of $u$ if $a<-1$. We note that the above identity is the d' Alembert's formula for the one-dimensional linear wave equation. Using this lower estimate, we can show that the positivity of $R(0,\cdot )$ and $S(0,\cdot )$ is equivalent to the global existence with $a \leq -2$ (e.g., Lax \cite{lax}).
While, when $-2< a <-1$, Chen, Pan, and  Zhu \cite{GPZ} established new upper estimates of $u$ and proved that the same equivalence holds (see also \cite{GC1} and \cite{GC2}). 

\subsection{Results on the degeneracy}
When $c(u(t,x))$ approaches $0$, the equation becomes non-strictly hyperbolic, which would not satisfy the persistence of regularity in general.  This phenomena, which is referred to as degeneracy, is a breakdown of solutions and has been investigated in a previous paper \cite{s1, s3, s4}
The author has studied the sufficient condition for the occurrence of the degeneracy.
We restrict $c(u)=(1+u)^a$ with $a \in \R$.
If it is assumed that $a>0$, $2/a+1 > -2\int_{-1} ^0 c(\theta) d\theta$, and the negativity of $R(0,x)$ and $S(0,x)$ holds, then \eqref{req} with $\lambda=2$ has 
a global smooth solution such that the equation does not degenerate (e.g. Johnson \cite{JJ} and Yamaguchi and Nishida \cite{y-n}).
In fact, using \eqref{df} and  the negativity of $R(0,x)$ and $S(0,x)$ (which means the decreasing property of $w$ and $z$) and assuming $\lim_{|x|\rightarrow \infty }  u_0 (x) =0$, we can obtain
\begin{align*}
\frac{2(1+u(t,x))^{a+1}}{a+1} \geq \frac{2}{a+1} - \int_{\R} u_1 dx,
\end{align*}
which implies the positivity of $1+u$ (non-degeneracy) for $a>-1$, if $2/a+1 > -2\int_{-1} ^0 c(\theta) d\theta$.
On the other hand, in \cite{s3}, the author has shown  that degeneracy occurs in finite time, if $\int_{\mathbb{R}} u_1 (x) dx < - 2/a+1 > -2\int_{-1} ^0 c(\theta) d\theta$, $a>0$, and $R(0,x), S(0,x) \leq 0$.
Namely, these results indicate that $-2\int_{-1} ^0 c(\theta) d\theta$ is a threshold of $ \int_{\mathbb{R}} u_1 (x) dx$ separating the global existence of solutions (such that the equation does not degenerate) and the degeneracy of the equation under the assumption that $R(0,x)$ and $S(0,x)$ are negative.
If $R(0,x)$ or $S(0,x)$ has a positive value and $a>0$, then solutions can blow up in finite time, as mentioned in the introduction.
It seems still open as to whether  the function $c(u) =(1+u(t,x))^a$ diverges when gradient blow-up occurs for $-1<a<0$.
For the case in which $a>0$ and $0 \leq \lambda <2$, it is proven in \cite{s4} that the degeneracy in finite time of the equation in \eqref{req} can occur regardless of $\int_{\mathbb{R}} u_1 (x) dx$ (see also \cite{ks2}). In other words, the degeneracy occurs in finite time, if $R(0,x), S(0,x) \leq 0$ with non-trivial data.

\subsection{Blow-up results for the multi-dimensional case with radially symmetric initial data}
Duan, Hu, and Wang \cite{yhgw} considered the multi-dimensional variational wave equation:
\begin{eqnarray*}
u_{tt}= c(u)\nabla \cdot (c(u) \nabla u) 
\end{eqnarray*}
and extended the one-dimensional blow-up result of \cite{GHZ} to the multi-dimensional case with radially symmetric initial data.
The key idea for the proof is to introduce new variables $\bar{R} = r^{(n-1)/2}(u_t- c(u)u_x)$ and $\bar{R} = r^{(n-1)/2}(u_t- c(u)u_x)$ with $r=|r|$ and to use the energy inequality $E(t):= \int_0 ^\infty R^2(t,r) + S^2 (t,r) dr =E(0)$. Using this concept presented, we can extend Theorem \ref{main} to the multi-dimensional case.
Cai, Chen, and Wang \cite{HGT} investigated the multi-dimensional hyperbolic conservation law (the compressible Euler equation)  and showed that the gradient blow-up occurs in finite time with radially symmetric initial data.

\subsection{Traveling wave solution}
Glassey, Hunter, and Zheng \cite{GHZ}investigated  the traveling wave solution of \eqref{req} with $\lambda=1$ and $c(u) =\alpha \cos^2 u + \beta \sin^2 u$. Namely, they looked for solutions of the form
$$
u(t,x) = \psi (x-st),
$$
where $s$ is a constant. They constructed the unbounded and continuous traveling wave solution and showed that the solution has isolated singularities at $x-st = \xi_0 +2 \pi n$ for $n \in \N$, where $\xi \in [0,2\pi]$.
Furthermore, they proved that the traveling wave solution is H\"older continuous at $\xi_0 +2 \pi n$ and that their H\"older exponent is $2/3$ if $c'(\xi) \not=0$, and is $1/2$ if $c'(\xi) \not=0$, which is different from our H\"older estimate in Theorem \ref{main-pro}. We believe that this difference is caused by the non-smoothness of the traveling solution at initial time $t=0$ and/or its unboundedness and the H\"older estimate in Theorem \ref{main-pro} as indicated by numerical experiments by Chen and Shen \cite{CS}.


\begin{thebibliography}{99}

\bibitem{Ardr1} D. Aregba-Driollet and  B. Hanouzet, Cauchy problem for one-dimensional semilinear hyperbolic systems: Global existence, blow up,  J. Differ. Equ, {\bf 125} (1996), 1-26.
\bibitem{al} W. F. Ames and  R. J. Lohner,  Group properties of $u_{tt}=[f(u)u_x]_x $,  Int. J. Non-linear Mech. {\bf 16} (1981)  439-447.
\bibitem{bz} A. Bressan and Y. Zheng,  Conservative solutions to a nonlinear variational wave equation,  Comm. Math. Phys. {\bf 266} (2006)  471-497.
\bibitem{bal} M. Balabane, Ondes progressives et r\'esultats d'expolosion pour des syst\'em non lin\'eaires du pre,oer order, C.R. Acad. Sc. Paris, Ser. I {\bf 315} (1992), 1381-1383.
\bibitem{rbgs} R. Bianchini and G. Staffilani, Revisitation of a Tartar's result on a semilinear hyperbolic system with null condition,  arXiv:2001.03688.
\bibitem{G}  G. Chen, Formation of singularity and smooth wave propagation for the non-isentropic compressible Euler equations, J. Hyperbolic Differ. Equ., {\bf 8} (2011)  671-690.
\bibitem{GC1}  G, Chen, Optimal time-dependent lower bound on density for classical solutions of 1-D compressible Euler equations, Indiana Univ. Math. J., 66 (2017),  725-740. 
\bibitem{GC2}  G, Chen, Optimal density lower bound on nonisentropic gas dynamics, J. Differential Equations, 268 (2020), 4017-4028. 
\bibitem{GPZ}  G. Chen, R. Pan and S. Zhu,  Singularity formation for compressible Euler equation, SIAM J. Math. Anal., {\bf 49},  (2017) 2591-2614.
\bibitem{HGT} H. Cai, G, Chen and T.-Y. Wang, Singularity formation for radially symmetric expanding wave of Compressible Euler Equations, arXiv:2001.06753.
\bibitem{NC} N. Cristescu,  Dynamic plasticity, North-Holland, Appl. Math. Mech., 1967.
\bibitem{CS} G. Chen and Y. Shen, Existence and regularity of solutions in nonlinear wave equations, Discrete Contin. Dyn. Syst. {\bf 35} (2015)  3327-3342.
\bibitem{NC} N. Cristescu,  Dynamic plasticity, North-Holland, Appl. Math. Mech., 1967.
\bibitem{GHZ} R. T. Glassey, J. K. Hunter and  Y. Zheng, Singularities of a variational wave equation, J. Differential Equations  {\bf 129} (1996) 49-78.
\bibitem{GHZ2} R. T. Glassey, J. K. Hunter and  Y. Zheng, Singularities and oscillations in a nonlinear wave variational wave equation, Singularities and oscillations, The IMA Volumes in Mathematics and its applications, 91 (1997) pp. 37-60.
\bibitem{HKM} T. J. R. Hughes, T. Kato and J. E. Marsden, Well-posed quasi-linear second-order hyperbolic systems with applications to nonlinear elastodynamics and general relativity, Arch. Ration. Mech. Anal. {\bf 63} (1977) 
\bibitem{yhgw}  W. Duan, Y. Hu and G. Wang, Singularity and existence for a multidimensional variational wave equation arising from nematic liquid crystals, J. Math. Anal. Appl. {\bf 487} (2020) 124026.
\bibitem{JJ} J. L. Johnson, Global continuous solutions of hyperbolic systems of quasilinear equations Bull. Amer. Math. Soc.  {\bf 73} (1967) 639-641.
\bibitem{ks2}  K. Kato and  Y. Sugiyama, Blow up of solutions to second sound equation in one space dimension, Kyushu J. Math. {\bf 67} (2013) 129-142. 
\bibitem{tk} T. Kato, The Cauchy problem for quasi-linear symmetric hyperbolic systems, Arch. Ration. Mech. Anal.,  58  (1975), pp. 181-205. 
\bibitem{km} S. Klainerman and A. Majda, Formation of singularities for wave equations including the nonlinear vibrating string, Comm. Pure Appl. Math. {\bf 33} (1980) 241-263. 
\bibitem{dxk} D.-X. Kong, Formation and propagation of singularities for $2 \times 2$ quasilinear hyperbolic systems, Trans. Amer. Math. Soc. {\bf 354} (2002) 3155-3179.
\bibitem{lax} P. D. Lax, Development of Singularities of solutions of nonlinear hyperbolic partial differential equations, J. Math. Phys. {\bf 5} (1964) 611-613.
\bibitem{ll} L. D. Landau and E. M. Lifshitz, Fluid mechanics, volume 6 of course of theoretical physics, Pergamon, 1959.
\bibitem{m} A. Majda, Compressible fluid flow and systems of conservation laws in several space variables, Springer, Appl. Math. Sci.,  1984.
\bibitem{mm} R. Manfrin,  A note on the formation of singularities for quasi-linear hyperbolic systems,  SIAM J. Math. Anal. {\bf 32}  (2000) 261-290.
\bibitem{lin} H. Lindblad, Global solutions of quasilinear wave equations, Amer. J. Math. {\bf 130} (2008)  115-157.
\bibitem{JR} J. Rauch, Exposion for some semilinear wave equations,  J. Differ. Equ, {\bf 74} (1988), 29-33.
\bibitem{s1} Y. Sugiyama, Global existence  of solutions to some quasilinear wave equation in one space dimension, Differential Integral Equations {\bf 6} (2013) 487-504.
\bibitem{s3} Y. Sugiyama, Degeneracy in finite time of 1D quasilinear wave equations, SIAM J. Math. Anal. {\bf 3} (2016) 847-860. 
\bibitem{s4} Y. Sugiyama, Degeneracy in finite time of 1D quasilinear wave equations II, Eolution equation and control theory {\bf 6} (2017) 615-628.
\bibitem{tartar} L. Tartar, some existence theorems for semilinear wave equations in one space variable, Technical report, Wisconsin Univ-Madison Mathematics Research Center, (1981).
\bibitem{y-n} M. Yamaguchi and T. Nishida, On some global solution for quasilinear hyperbolic equations, Funkcial. Ekvac.  {\bf 11} (1968) 51-57.
\bibitem{z} N. J. Zabusky, Exact solution for the vibrations of a nonlinear continuous model string,  J. Math. Phys. {\bf 3} (1962) 1028-1039. 
\bibitem{zz1} P. Zhang and Y. Zheng, Rarefactive solutions to a nonlinear variational wave equation of liquid crystals, Comm. Partial Differential Equations {\bf 26} (2001) 381-419.
\bibitem{zz2} P. Zhang and Y. Zheng, Singular and rarefactive solutions to a nonlinear variational wave equation, Chinese Ann. Math. Ser. B {\bf 22} (2001) 159-170. 
\bibitem{zz3} P. Zhang and Y. Zheng, Weak solutions to a nonlinear variational wave equation, Arch. Ration. Mech. Anal. {\bf 166} (2003) 303-319. 

\end{thebibliography}
\end{document}